\documentclass[12pt]{amsart}

\usepackage[top=1.0in, bottom=1.0in, left=1in, right=1in]{geometry}

\usepackage[backref]{hyperref}

\usepackage{amssymb, amsmath, amsthm, mathrsfs, comment} 

\usepackage{latexsym}
\usepackage{hyperref}
\usepackage{enumerate}

\newtheorem{theorem}{Theorem}[section]
\newtheorem{lemma}[theorem]{Lemma}
\newtheorem{proposition}[theorem]{Proposition}

\newtheorem{question}[theorem]{Question}

\newtheorem{corollary}[theorem]{Corollary}

\theoremstyle{remark}

\numberwithin{equation}{section}

\newcommand{\bbZ}{\ensuremath{\mathbb{Z}}}
\newcommand{\bbQ}{\ensuremath{\mathbb{Q}}}
\newcommand{\psl}{\ensuremath{\operatorname{PSL}}}
\newcommand{\bbF}{\ensuremath{\mathbb{F}}}
\newcommand{\mo}{{-1}}
\newcommand{\sln}{\ensuremath{\operatorname{SL}}}

\begin{document}

\title{On non-surjective word maps on $\mathrm{PSL}_{2}(\mathbb{F}_{q})$}

\author{Arindam Biswas}
\address{Department of Mathematics, Technion - Israel Institute of Technology, Haifa 32000, Israel}
\curraddr{}
\email{biswas@campus.technion.ac.il}
\thanks{}

\author{Jyoti Prakash Saha}
\address{Department of Mathematics, Indian Institute of Science Education and Research Bhopal, Bhopal Bypass Road, Bhauri, Bhopal 462066, Madhya Pradesh,
India}
\curraddr{}
\email{jpsaha@iiserb.ac.in}
\thanks{}

\subjclass[2010]{20D05, 16R30}

\keywords{Word maps, finite simple groups}

\begin{abstract}
Jambor--Liebeck--O'Brien showed that there exist non-proper-power word maps which are not surjective on $\mathrm{PSL}_{2}(\mathbb{F}_{q})$ for infinitely many $q$. This provided the first counterexamples to a conjecture of Shalev which stated that if a two-variable word is not a proper power of a non-trivial word, then the corresponding word map is surjective on $\mathrm{PSL}_2(\mathbb{F}_{q})$ for all sufficiently large $q$. Motivated by their work, we construct new examples of these types of non-surjective word maps. As an application, we obtain non-surjective word maps on the absolute Galois group of $\mathbb Q$. 
\end{abstract}

\maketitle 

\section{Introduction}

A word in $k$ variables is an expression of the form
$$w(x_{1},\ldots, x_{k}) = \prod_{j=1}^{t}x_{i_{j}}^{\varepsilon_{j}},$$ where $i_{j}\in [1,k]$, for each $j \in [1,t]$ and $\varepsilon_{j} = \pm 1$. Given a word $w$ in $k$ variables, and a group $G$, one has the verbal mapping 
$$w: G\times \cdots \times G \rightarrow G,$$
defined by, 
$$w(g_{1}, \ldots, g_{k}) = \prod_{j=1}^{t}g_{i_{j}}^{\varepsilon_{j}}.$$
See Segal \cite[Chapter 1]{SegalBook}. It was first shown by Liebeck--Shalev \cite[Theorem 1.6]{LiebeckShalev}, that for a given non-trivial word $w$, each element of each large enough finite simple group $G$ can be expressed as a product of $c(w)$ values of $w$ in $G$ unless $w$ is trivial on $G$, with $c(w)$ only depending on $w$. In recent years, a lot of research has been devoted to studying the $c(w)$'s. For instance, it has since been established that $c(w) = 2$. This follows from the works of Larsen--Shalev \cite{LarsenShalev}, Shalev \cite{Shalev}, Larsen--Shalev--Tiep \cite{LarsenShalevTiep}. Some words actually have $c(w) = 1$ (i.e., they are surjective) and in fact, it was a long-standing conjecture to show that the commutator word has $c(w) = 1$ (the Ore's conjecture). This was resolved by Liebeck--O'Brien--Shalev--Tiep \cite{LiebeckBrienShalevTiep}. There are other words which are surjective. On the other hand, it can be easily seen that, if we take $w = x_{1}^{n}$ and $G$ is a finite simple group with $\mathrm{gcd}(|G|,n) > 1$, then $c(w) > 1$. Shalev conjectured that if $w(x_{1}, x_{2})$ is not a proper power of a non-trivial word, then the corresponding
word map is surjective on $\psl_{2}(\bbF_q)$ for all sufficiently large $q$, see \cite[Conjecture 8.3]{BandmanGarionGrunewald}. This conjecture was recently disproved by Jambor--Liebeck--O'Brien \cite[Theorem 1]{JamborLiebeckOBrien}. They gave examples of an infinite family of non-proper-power words which are non-surjective on an infinite family of finite simple groups. The words that they constructed have word lengths $3q-1$ where $q\geq 5$ is a prime. In this note, our main motivations are to extend this class of non-proper-power words and also to construct examples in other word lengths.

\subsection{Statement of results}
In \cite[Theorem 1]{JamborLiebeckOBrien}, for primes $q\geq 5$, the words of the form 
$$x_1^2 [x_1^{-2}, x_2^{-1}]^{\frac{q-1}2}$$
(having word length $3q-1$) have been considered and the non-surjectivity of the induced word map on $\psl_2(\bbF_{p^n})$ has been established for primes $p$ and integers $n$ satisfying certain suitable conditions. Our principal result (Theorem \ref{Thm:NonSurjWord}) considers words that are not in the purview of \cite[Theorem 1]{JamborLiebeckOBrien}. It focuses on the words of the form 
$$x_1^{ 2} (x_1^2 x_2 x_1^{\pm 2}x_2^{-1})^{\frac{r-1}2}$$
(having word length $3r-1$) where $r\geq 5$ is any odd integer not divisible by $3$, and the words of the form
$$x_1^{-2} (x_1^2 x_2 x_1^{\pm 2}x_2^{-1})^{\frac{r-1}2}$$
(having word length $3r-5$) where $r\geq 7$ is any odd integer such that $r+1$ is not divisible by $3$. In Theorem \ref{Thm:NonSurjWord}, we establish the non-surjectivity of the induced word map on $\psl_2(\bbF_{p^n})$ for primes $p$ and integers $n$ satisfying appropriate conditions. 
Next, we show that there exist primes $p$ and integers $n$ such that these conditions hold (see Proposition \ref{Prop:InfManyPrime}). 
In \S \ref{SubSec:NonSurjGalois}, we combine Theorem \ref{Thm:NonSurjWord} and Proposition \ref{Prop:InfManyPrime} with 
the results on the inverse Galois problem for $\psl_2(\bbF_p)$ (obtained by Shih \cite{ShihInverseGaloisPSL2Fp} and Zywina \cite{ZywinaInverseGaloisPSL}) to show that the above-mentioned words are non-surjective on the absolute Galois group of the number fields having degree coprime to $6$, see Proposition \ref{Prop:NonSurjAbsGal}. 

\section{Identities involving trace polynomial of word maps}

For any word $w$ in the free group $F_2$, it was observed by Vogt \cite{VogtENS}, and Fricke and Klein \cite{FrickeKleinAutoFunct} that 
the trace of $w(x, y)$ can be expressed as a polynomial in terms of the traces of $x, y, xy$ for any $2\times 2$ matrices $x, y$ with determinant $1$. More precisely, there exists a polynomial $\tau(w) \in \bbZ[s, t, u]$ such that for any field $K$ and any $x, y\in \sln_2(K)$, the trace of $w(x, y)$ is equal to the polynomial $\tau(w)$ evaluated at $s = \mathrm{tr} (x), t = \mathrm{tr} (y), u = \mathrm{tr} (xy)$. 
For a proof of this result, we refer to the works of Horowitz \cite[Theorem 3.1]{HorowitzFreeRepreSL2}, Plesken--Fabia\'{n}ska \cite[Theorem 2.2]{PleskenFabianskaL2Quot} and Jambor \cite[Theorem 2.2]{JamborL2Quotient}.

Let $y_1$ denote one of the words 
$$x_1^2x_2 x_1^{ 2} x_2^\mo, 
\quad 
x_1^2x_2 x_1^{- 2} x_2^\mo.$$
For $k\in \bbZ$ with $k\neq 1$, 
let $y_k$ denote the $k$-th power of the word $y_1$.

\begin{lemma}
\label{Lemma:ForwardBackward}
If 
\begin{equation}
\label{Eqn:Swap}
\tau(x_1^2 y_{k-1}) 
= \tau (x_1^{-2} y_k)
\end{equation}
holds for $k = a, a+1$ with $a\in \bbZ$, then it holds for $k = a+2$. 
If Equation \eqref{Eqn:Swap} holds for $k = b,  b-1$ with $b\in \bbZ$, then it holds for $k = b-2$. 
For $k = 0, 1$, Equation \eqref{Eqn:Swap} holds. 
Consequently, Equation \eqref{Eqn:Swap} holds for any $k\in \bbZ$. 
\end{lemma}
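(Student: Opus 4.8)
The plan is to prove the recurrence by exploiting the trace identity $\tau(AB) = \tau(A)\tau(B) - \tau(AB^{-1})$, which holds for any two words $A, B \in F_2$ viewed in $\sln_2$. The key observation is that consecutive powers $y_{k-1}, y_k, y_{k+1}$ of the single word $y_1$ are related multiplicatively: $y_{k+1} = y_k \cdot y_1$ and $y_{k-1} = y_k \cdot y_1^{-1}$. Applying the identity to the words $x_1^2 y_{k-1}$ and $x_1^{-2}y_k$ with the extra factor $y_1$ should let me express $\tau(x_1^2 y_k)$ and $\tau(x_1^{-2}y_{k+1})$ as $\tau(y_1)$ times the corresponding $k$-level quantities, minus the $(k-1)$-level quantities. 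Concretely, I expect to obtain relations of the shape
\begin{equation*}
\tau(x_1^2 y_k) = \tau(y_1)\,\tau(x_1^2 y_{k-1}) - \tau(x_1^2 y_{k-2}),
\end{equation*}
and the analogous relation for $\tau(x_1^{-2} y_k)$, so that both sides of Equation \eqref{Eqn:Swap} satisfy the \emph{same} three-term linear recurrence with the same coefficient $\tau(y_1)$.

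First I would establish these two linear recurrences rigorously. For the left-hand side, I write $x_1^2 y_k = (x_1^2 y_{k-1}) y_1$ and apply $\tau(AB) = \tau(A)\tau(B) - \tau(AB^{-1})$ with $A = x_1^2 y_{k-1}$ and $B = y_1$; since $AB^{-1} = x_1^2 y_{k-2}$, this yields the recurrence above provided I can replace $\tau(B) = \tau(y_1)$ appropriately, noting $\tau(x_1^2 y_{k-1} y_1^{-1}) = \tau(x_1^2 y_{k-2})$. The same manipulation applied to $A = x_1^{-2} y_{k-1}$, $B = y_1$ gives the matching recurrence for the right-hand side. This is the computational heart, but it is routine once the telescoping with $y_1$ is set up correctly.

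With both sides obeying the identical recurrence $f(k) = \tau(y_1) f(k-1) - f(k-2)$, the forward and backward induction steps of the lemma follow immediately: if two consecutive values of the difference $\tau(x_1^2 y_{k-1}) - \tau(x_1^{-2}y_k)$ vanish, the recurrence forces the next one to vanish, and symmetrically in the backward direction. The two-sided induction then propagates any pair of consecutive equalities to all integers $k$. It remains only to verify the base cases $k = 0$ and $k = 1$. For $k = 1$, Equation \eqref{Eqn:Swap} reads $\tau(x_1^2 y_0) = \tau(x_1^{-2} y_1)$, i.e. $\tau(x_1^2) = \tau(x_1^{-2} \cdot x_1^2 x_2 x_1^{\pm 2} x_2^{-1}) = \tau(x_2 x_1^{\pm 2} x_2^{-1})$; since $x_2 x_1^{\pm 2} x_2^{-1}$ is conjugate to $x_1^{\pm 2}$ and $\tau(x_1^2) = \tau(x_1^{-2})$ (as $\tr A = \tr A^{-1}$ in $\sln_2$), both sides equal $\tr(x_1^2)$. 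For $k = 0$, the identity $\tau(x_1^2 y_{-1}) = \tau(x_1^{-2})$ is checked by a direct trace computation of the same flavor.

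The main obstacle I anticipate is bookkeeping the base cases and confirming that the substitution $B = y_1$ produces exactly the coefficient $\tau(y_1)$ on both sides with no stray conjugation-dependent terms; the two choices of sign in $y_1 = x_1^2 x_2 x_1^{\pm 2} x_2^{-1}$ must both be handled, and one must check that the cyclic invariance of the trace lets $\tau(x_1^{\pm 2} y_k)$ and $\tau(y_k x_1^{\pm 2})$ be used interchangeably. Once the shared recurrence and the two base cases are in hand, the two-sided induction is a formality, and the final sentence of the lemma follows since $k = 0, 1$ seed the forward induction to all $k \geq 0$ and the backward induction to all $k \leq 1$, covering every integer.
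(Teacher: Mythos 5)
Your proposal is correct and follows essentially the same route as the paper: the paper's induction-step chains are precisely inline applications of the identity $\tau(AB)=\tau(A)\tau(B)-\tau(AB^{-1})$ with $B=y_1$, yielding the same three-term recurrence $f(k)=\tau(y_1)f(k-1)-f(k-2)$ for both $\tau(x_1^2y_{k-1})$ and $\tau(x_1^{-2}y_k)$, and your base cases $k=0,1$ are verified by the same conjugation, cyclicity, and $\tau(A)=\tau(A^{-1})$ arguments the paper uses. Your packaging of the step as ``the difference satisfies a linear recurrence, so two consecutive zeros propagate in both directions'' is a tidy reformulation, not a different proof.
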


\begin{proof}

If Equation \eqref{Eqn:Swap} holds for $k = a,  a+1$, then it holds for $k = a+2$ since 
\begin{align*}
\tau(x_1^{-2} y_{a+2}) 
& = \tau (y_{a+2} x_1^{-2}) \\
& = \tau (y_1^2 y_{a} x_1^{-2}) \\
& = \tau(y_1) \tau(y_{a+1}x_1^{-2}) - \tau(y_{a} x_1^{-2}) \\
& = \tau(y_1) \tau(x_1^{-2}y_{a+1}) - \tau(x_1^{-2}y_{a} ) \\
& = \tau(y_1) \tau(x_1^2y_{a}) - \tau(x_1^2 y_{a-1} ) \\
& = \tau(y_1) \tau(y_{a}x_1^2) - \tau(y_{a-1} x_1^2 ) \\
& = \tau(y_1^2 y_{a-1}x_1^2) \\
& = \tau(y_{a+1}x_1^2) \\
& = \tau(x_1^2 y_{a+1}) \\
\end{align*}
hold. 
If Equation \eqref{Eqn:Swap} holds for $k = b,  b-1$, then it holds for $k = b-2$ since 
\begin{align*}
\tau(x_1^{-2} y_{b-2}) 
& = \tau(y_{b-2}x_1^{-2} ) \\
& = \tau(y_1)\tau(y_1 y_{b-2}x_1^{-2} ) - \tau(y_1^2 y_{b-2}x_1^{-2} )\\
& = \tau(y_1)\tau(y_{b-1}x_1^{-2} ) - \tau(y_{b}x_1^{-2} )\\
& = \tau(y_1)\tau(x_1^{-2} y_{b-1}) - \tau(x_1^{-2} y_{b})\\
& = \tau(y_1)\tau(x_1^{2} y_{b-2}) - \tau(x_1^{2} y_{b-1})\\
& = \tau(y_1)\tau(y_{b-2}x_1^{2} ) - \tau(y_{b-1}x_1^{2} )\\
& = \tau(y_1)\tau(y_{b-2}x_1^{2} ) - (\tau(y_{-1}) \tau(y_{-1} y_{b-1}x_1^{2} ) - \tau(y_{-2} y_{b-1} x_1^2))\\
& = \tau(y_1)\tau(y_{b-2}x_1^{2} ) - (\tau(y_1) \tau(y_{b-2}x_1^{2} ) - \tau(y_{b-3} x_1^2))\\
& = \tau(y_{b-3} x_1^2) \\
& = \tau( x_1^2 y_{b-3}) 
\end{align*}
hold. 

Note that Equation \eqref{Eqn:Swap} holds for $k = 1$ since 
\begin{align*}
\tau(x_1^2) 
& = \tau(x_2 x_1^{\pm 2}x_2^\mo) \\
& = \tau(x_1^{-2} y_1).
\end{align*}
It follows that 
\begin{align*}
\tau(x_1^2y_\mo) 
& = \tau(y_\mo^\mo x_1^{-2}) \\
& = \tau(y_1 x_1^{-2}) \\
& = \tau(x_1^{-2} y_1) \\
& = \tau(x_1^2) \\
& = \tau(x_1^{-2}). 
\end{align*}
So, Equation \eqref{Eqn:Swap} holds for $k = 0$. Consequently, Equation \eqref{Eqn:Swap} holds for any $k\in \bbZ$. 
\end{proof}

\begin{lemma}
\label{Lemma:Factorization}
For any integer $k\geq 1$, the equalities 
\begin{align*}
\tau(x_1^{2}y_{\pm k}) 
& = \tau(x_1^{\pm 2} y_k) \\
& = \tau(x_1^2)
\left( 
\sum_{i = 1} ^{k_\pm}
(-1)^{k_\pm -i} \tau(y_i) + (-1)^{k_\pm}
\right)\\
& = 
(\tau(x_1)^2-2)
\prod_{i=1}^{k_\pm} 
(\tau(y_1) + \zeta_{2k_\pm+1}^i + \zeta_{2k_\pm+1}^{-i}) 
\end{align*}
hold where 
$k_\pm 
= k - \frac 12 \pm \frac 12$. 
\end{lemma}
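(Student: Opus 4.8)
The plan is to introduce the shorthand $s := \tau(x_1^2)$, $t := \tau(y_1)$, and $F_k := \tau(x_1^2 y_k)$, and to prove the three asserted equalities in turn, reducing each to elementary trace identities in $\sln_2$ together with Lemma \ref{Lemma:ForwardBackward}. First I would dispose of $\tau(x_1^2 y_{\pm k}) = \tau(x_1^{\pm 2} y_k)$. The $+$ case is a tautology; for the $-$ case I would use that the trace of an $\sln_2$ element equals that of its inverse and that trace is conjugation-invariant, giving $\tau(x_1^2 y_{-k}) = \tau(x_1^2 y_k^{-1}) = \tau(y_k x_1^{-2}) = \tau(x_1^{-2} y_k)$. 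By Lemma \ref{Lemma:ForwardBackward} this equals $\tau(x_1^2 y_{k-1}) = F_{k-1}$, matching the index $k_- = k-1$.

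For the middle (alternating-sum) equality the engine is the Cayley--Hamilton relation $y_k + y_k^{-1} = \tau(y_k)\,I$ in $\sln_2$; multiplying on the left by $x_1^2$ and taking traces yields $s\,\tau(y_k) = \tau(x_1^2 y_k) + \tau(x_1^2 y_k^{-1})$. Combining with the computation above, $\tau(x_1^2 y_k^{-1}) = \tau(x_1^{-2}y_k) = F_{k-1}$, so I obtain the two-term recurrence $F_k + F_{k-1} = s\,\tau(y_k)$, valid for all $k$. With the base value $F_0 = \tau(x_1^2) = s$, telescoping this recurrence with alternating signs gives $F_k = s\big(\sum_{i=1}^k (-1)^{k-i}\tau(y_i) + (-1)^k\big)$, which is the $+$ case ($k_+ = k$); the $-$ case is the same formula at $k-1 = k_-$ via $\tau(x_1^{-2}y_k) = F_{k-1}$. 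Here I would also record $s = \tau(x_1^2) = \tau(x_1)^2 - 2$, identifying the prefactor with $\tau(x_1)^2 - 2$.

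It remains to factor the alternating sum. Setting $m = k_\pm$ and $Q_m(t) := \sum_{i=1}^m (-1)^{m-i}\tau(y_i) + (-1)^m$, I would substitute $t = \mu + \mu^{-1}$, so that $\tau(y_i) = \mu^i + \mu^{-i}$, making the sum geometric; I expect it to collapse to $Q_m = (-1)^m\sum_{i=-m}^m(-\mu)^i = \mu^{-m}(\mu^{2m+1}+1)/(\mu+1)$. Both $Q_m$ and the product $\prod_{i=1}^m(t + \zeta_{2m+1}^i + \zeta_{2m+1}^{-i})$ are monic of degree $m$ in $t$ (the top term of $Q_m$ coming from $\tau(y_m) = t^m + \cdots$), so it suffices to match their roots. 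The vanishing of $Q_m$ amounts to $\mu^{2m+1} = -1$ with $\mu \neq -1$, i.e. $\mu = e^{i\pi(2j+1)/(2m+1)}$; pairing $\mu$ with $\mu^{-1}$ produces the $m$ values $t = 2\cos\frac{(2j+1)\pi}{2m+1} = -2\cos\frac{2\pi i}{2m+1} = -(\zeta_{2m+1}^i + \zeta_{2m+1}^{-i})$ for $i = 1,\dots,m$, which are exactly the roots of the product. Two monic polynomials of the same degree with the same roots coincide, giving the third equality.

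The routine parts are the trace manipulations; the step requiring the most care will be the root bookkeeping in the last paragraph, namely verifying that the solutions of $\mu^{2m+1}=-1$ with $\mu \neq -1$ yield exactly $m$ \emph{distinct} (hence simple) values $-(\zeta_{2m+1}^i+\zeta_{2m+1}^{-i})$ with no repetition, so that the count of roots matches the degree. This relies on the arguments $2\pi i/(2m+1)$ lying in $(0,\pi)$ for $i=1,\dots,m$, where cosine is injective. Throughout I would treat the identities as polynomial identities in $t$ (equivalently, valid on a Zariski-dense set of $\mu$), so the apparent $0/0$ of the closed form at $\mu = -1$ is harmless.
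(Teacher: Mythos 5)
Your proof is correct, and for the first two displayed equalities it follows essentially the paper's route: the identity $\tau(x_1^2 y_{-k}) = \tau(y_k x_1^{-2}) = \tau(x_1^{-2}y_k)$, a two-term recurrence $F_k + F_{k-1} = \tau(x_1^2)\tau(y_k)$ whose validity hinges on Lemma \ref{Lemma:ForwardBackward}, and alternating-sign telescoping from the base value $F_0 = \tau(x_1^2)$. Your derivation of the recurrence from Cayley--Hamilton ($y_k + y_k^{-1} = \tau(y_k)I$, hence $\tau(AB) + \tau(AB^{-1}) = \tau(A)\tau(B)$) compresses the paper's longer chain of one-variable expansions $\tau(x_1^2 y_k) = \tau(x_1)\tau(x_1 y_k) - \tau(y_k) = \cdots = \tau(x_1^2)\tau(y_k) - \tau(x_1^{-2}y_k)$, and your handling of the $-k$ case by reducing directly to $F_{k-1}$ via Lemma \ref{Lemma:ForwardBackward} replaces the paper's second, separate telescoping computation; both shortcuts are sound. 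The genuine divergence is the third equality: the paper simply cites \cite[Lemma 2.1]{JamborLiebeckOBrien} for the factorization of $\sum_{i=1}^{m}(-1)^{m-i}\tau(y_i) + (-1)^m$ as $\prod_{i=1}^{m}(\tau(y_1) + \zeta_{2m+1}^i + \zeta_{2m+1}^{-i})$, whereas you reprove it from scratch by substituting $\tau(y_1) = \mu + \mu^{-1}$ (so that $\tau(y_i) = \mu^i + \mu^{-i}$), collapsing the sum to $\mu^{-m}(\mu^{2m+1}+1)/(\mu+1)$, and matching roots of two monic degree-$m$ polynomials. Your root bookkeeping checks out: the solutions of $\mu^{2m+1} = -1$ with $\mu \neq -1$ pair off into exactly $m$ values $t = -2\cos(2\pi i/(2m+1))$, $i = 1, \dots, m$, which are pairwise distinct since the arguments lie in $(0, \pi)$, and the passage through the parametrization $t = \mu + \mu^{-1}$ is legitimate as you note, since it is dominant and the identities are polynomial in $t$. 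So your version buys self-containedness --- in effect a proof of the cited Jambor--Liebeck--O'Brien lemma --- at the cost of the Chebyshev-style computation the paper outsources.
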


\begin{proof}
The first equality follows since 
\begin{align*}
\tau(x_1^2 y_{-k}) 
& = \tau (y_k x_1^{-2}) \\
& = \tau(x_1^{-2} y_k)
\end{align*}
hold for any integer $k$. 
Note that for any integer $k$,
\begin{align*}
\tau(x_1^2y_k) 
& = \tau(x_1) \tau(x_1y_k) - \tau(y_k) \\
& = \tau(x_1) (\tau(x_1^\mo) \tau(x_1^\mo x_1y_k) - \tau(x_1^{-2} x_1y_k)) - \tau(y_k) \\
& = \tau(x_1) \tau(x_1^\mo) \tau(y_k) - \tau(x_1) \tau(x_1^\mo y_k) - \tau(y_k) \\
& = \tau(x_1)^2\tau(y_k) - \tau(x_1^\mo) \tau(x_1^\mo y_k) - \tau(y_k)\\
& = (\tau(x_1^2) + 2)\tau(y_k) - \tau(x_1^\mo) \tau(x_1^\mo y_k) - \tau(y_k)\\
& = \tau(x_1^2) \tau(y_k) - (\tau(x_1^\mo) \tau(x_1^\mo y_k) - \tau(y_k))\\
& = \tau(x_1^2)\tau(y_k) - \tau(x_1^{-2} y_k) \\
& = \tau(x_1^2)\tau(y_k) - \tau(x_1^2 y_{k-1}) 
\end{align*}
hold where the second last equality follows from Lemma \ref{Lemma:ForwardBackward}. It follows that for any integer $k\geq 1$, 
\begin{align*}
\tau(x_1^2y_k) 
& = \tau(x_1^2y_k) - (-1)^k \tau(x_1^2) + (-1)^k \tau(x_1^2)\\
& = (-1)^k 
((-1)^k \tau(x_1^2y_k) - \tau(x_1^2)) + (-1)^k \tau(x_1^2)\\
& = (-1)^k 
\sum_{i = 1} ^k 
(
(-1)^i \tau(x_1^2y_i) 
- 
(-1)^{i-1} \tau(x_1^2 y_{i-1}) ) + (-1)^k \tau(x_1^2)\\
& = (-1)^k 
\sum_{i = 1} ^k 
(
(-1)^i (\tau(x_1^2y_i) + \tau(x_1^2 y_{i-1}))) + (-1)^k \tau(x_1^2)\\
& = (-1)^k 
\sum_{i = 1} ^k 
((-1)^i \tau(x_1^2) \tau(y_i)) + (-1)^k \tau(x_1^2)\\
& = \tau(x_1^2)
\left( 
\sum_{i = 1} ^k 
(-1)^{k-i} \tau(y_i) + (-1)^k 
\right)
\end{align*}
and 
\begin{align*}
\tau(x_1^2 y_{-k})
& = \tau(x_1^2 y_{-k})  - (-1)^k \tau(x_1^2) + (-1)^k \tau(x_1^2) \\
& = (-1)^k( (-1)^k\tau(x_1^2 y_{-k})  - \tau(x_1^2) ) + (-1)^k \tau(x_1^2) \\
& = (-1)^k \sum_{i=1}^k ( (-1)^i\tau(x_1^2 y_{-i})  - (-1)^{i-1} \tau(x_1^2y_{-(i-1)} ) ) + (-1)^k \tau(x_1^2) \\
& = (-1)^k \sum_{i=1}^k ( (-1)^i(\tau(x_1^2 y_{-i})  + \tau(x_1^2y_{-(i-1)} ) ) )+ (-1)^k \tau(x_1^2) \\
& = (-1)^k \sum_{i=1}^k ( (-1)^i(\tau(x_1^2) \tau(y_{-i+1}) ) )+ (-1)^k \tau(x_1^2) \\
& = \tau(x_1^2)
\left( 
\sum_{i = 1} ^k 
(-1)^{k-i} \tau(y_{-i+1}) + (-1)^k 
\right)\\
& = \tau(x_1^2)
\left( 
\sum_{i = 1} ^k 
(-1)^{k-i} \tau(y_{-i+1}) + (-1)^k 
\right)\\
& = \tau(x_1^2)
\left( 
\sum_{i = 2} ^k 
(-1)^{k-i} \tau(y_{-i+1}) + 2(-1)^{k-1} + (-1)^k 
\right)\\
& = \tau(x_1^2)
\left( 
\sum_{i = 1} ^{k-1} 
(-1)^{(k-1)-i} \tau(y_{-i}) + (-1)^{k-1}
\right)\\
& = \tau(x_1^2)
\left( 
\sum_{i = 1} ^{k-1} 
(-1)^{(k-1)-i} \tau(y_{i}) + (-1)^{k-1}
\right)\\
\end{align*}
hold. This proves that 
$$\tau(x_1^{2}y_{\pm k}) 
= \tau(x_1^2)
\left( 
\sum_{i = 1} ^{k_\pm}
(-1)^{k_\pm -i} \tau(y_i) + (-1)^{k_\pm}
\right).$$
The final equality follows from \cite[Lemma 2.1]{JamborLiebeckOBrien}. 

\end{proof}

\section{Non-surjectivity of word maps}

In this section, we study non-surjective word maps on $\psl_2(\bbF)$ for certain finite fields $\bbF$, and as an application, we obtain non-surjective word maps on the absolute Galois group of certain number fields. 

\subsection{Non-surjective maps on $\psl_2(\bbF)$}

For any positive integer $m$, let $\zeta_m$ denote the root of unity $e^{2\pi i /m}$. 

\begin{theorem}
\label{Thm:NonSurjWord}
Let $k$ be an integer with $k_\pm \geq 1$. Let $p$ be a prime and $n$ be a positive integer such that the following conditions hold. 
\begin{enumerate}
\item The integer $2$ is not a square modulo $p$.
\item The integer $n$ is odd. 
\item The inertia degree $f_i$ of $p$ in the extension $\bbQ(\zeta_{2k_\pm + 1}^i +  \zeta_{2k_\pm + 1}^{-i})$ does not divide $n$ for any $1\leq i \leq k_\pm$. 
\end{enumerate}
Then the word map $(x,y)\mapsto w(x, y)$ is not surjective on $\psl_2(\bbF_{p^n})$ where $w$ denotes one among the words 
$$x_1^{\pm 2} y_k, 
x_1^2y_{\pm k}
.$$
\end{theorem}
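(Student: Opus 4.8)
The plan is to use the factorization in Lemma \ref{Lemma:Factorization} to show that the trace of every value of $w$ is nonzero, and then to observe that the missing trace value $0$ corresponds exactly to the involutions of $\psl_2(\bbF_{p^n})$, which therefore cannot lie in the image. Writing $\tau(w)\in\bbZ[s,t,u]$ for the Fricke--Vogt trace polynomial, for every $x,y\in\sln_2(\bbF_{p^n})$ one has $\tr(w(x,y))=\tau(w)(s,t,u)$ with $s=\tr(x)$, $t=\tr(y)$, $u=\tr(xy)$ in $\bbF_{p^n}$. By Lemma \ref{Lemma:Factorization},
$$\tau(w)=(\tau(x_1)^2-2)\prod_{i=1}^{k_\pm}\left(\tau(y_1)+\zeta_{2k_\pm+1}^i+\zeta_{2k_\pm+1}^{-i}\right),$$
a product taken in the splitting field of its factors, so it suffices to check that no factor vanishes upon evaluation at any $(s,t,u)\in\bbF_{p^n}^3$.

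First I would dispose of the leading factor $\tau(x_1)^2-2=s^2-2$. Conditions (1) and (2) force $2$ to be a non-square in $\bbF_{p^n}$: since $p$ is odd and $(p^n-1)/2=\tfrac{p-1}2(1+p+\cdots+p^{n-1})$, one computes $2^{(p^n-1)/2}=(2^{(p-1)/2})^{1+p+\cdots+p^{n-1}}=(-1)^{1+p+\cdots+p^{n-1}}=-1$, the exponent being a sum of $n$ odd integers with $n$ odd. Hence $s^2\neq2$, so $s^2-2\neq0$ for all $s\in\bbF_{p^n}$.

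Next, for $1\leq i\leq k_\pm$ put $\alpha_i=\zeta_{2k_\pm+1}^i+\zeta_{2k_\pm+1}^{-i}$, a generator of $\bbQ(\alpha_i)$. Condition (3) is tailored to guarantee that the reduction of $\alpha_i$ at a prime above $p$ lies in the residue field $\bbF_{p^{f_i}}$ but not in $\bbF_{p^n}$, using that $\bbF_{p^{f_i}}\subseteq\bbF_{p^n}$ if and only if $f_i\mid n$. Since $\tau(y_1)(s,t,u)\in\bbF_{p^n}$ while $-\alpha_i\notin\bbF_{p^n}$, each factor $\tau(y_1)+\alpha_i$ is nonzero. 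Combining the two steps, $\tau(w)(s,t,u)\neq0$ for every $(s,t,u)\in\bbF_{p^n}^3$; equivalently $\tr(w(x,y))\neq0$ for all $x,y\in\sln_2(\bbF_{p^n})$.

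Finally I would transfer the conclusion to $\psl_2(\bbF_{p^n})$. Each listed word has even exponent sum in both $x_1$ and $x_2$ (indeed $y_1$ has $x_1$-exponent $2\pm2$ and $x_2$-exponent $0$), so $w(\tilde x,\tilde y)$ is independent of the choice of lifts $\tilde x,\tilde y\in\sln_2(\bbF_{p^n})$ and the word map on $\psl_2$ is induced by the one on $\sln_2$. If $g\in\psl_2(\bbF_{p^n})$ is an involution, any lift $\tilde g\in\sln_2(\bbF_{p^n})$ satisfies $\tilde g^2=\pm I$ with $\tilde g\neq\pm I$, forcing $\tr(\tilde g)=0$ as $p$ is odd. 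Were $g=w(x,y)$, lifting would give $w(\tilde x,\tilde y)=\pm\tilde g$, a trace-$0$ matrix, contradicting the previous step; since $\psl_2(\bbF_{p^n})$ always contains involutions (e.g.\ the image of a trace-$0$ element of $\sln_2$), the map is not surjective. The main obstacle is the third step: converting the arithmetic hypothesis (3) on inertia degrees into the non-membership $\alpha_i\notin\bbF_{p^n}$ of the reduction, which demands care when $p$ divides the conductor $2k_\pm+1$ or ramifies, so as to ensure the reduction of $\alpha_i$ generates the full residue field $\bbF_{p^{f_i}}$ rather than a proper subfield.
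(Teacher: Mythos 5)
Your proposal is correct and takes essentially the same route as the paper: factor $\tau(w)$ via Lemma \ref{Lemma:Factorization}, use conditions (1)--(2) to make $s^2-2$ nonvanishing on $\bbF_{p^n}$ and condition (3) to keep each $\zeta_{2k_\pm+1}^i+\zeta_{2k_\pm+1}^{-i}$ out of $\bbF_{p^n}$, and conclude that trace-zero word values, hence the involutions of $\psl_2(\bbF_{p^n})$, cannot occur --- your write-up is in fact more explicit than the paper's about the nonsquare computation and the descent to $\psl_2$. The obstacle you flag at the end resolves affirmatively: $\alpha_i=\zeta_{2k_\pm+1}^i+\zeta_{2k_\pm+1}^{-i}$ is a Galois conjugate of $\zeta_{m'}+\zeta_{m'}^{-1}$ for $m'=(2k_\pm+1)/\gcd(i,2k_\pm+1)$, and $\bbZ[\zeta_{m'}+\zeta_{m'}^{-1}]$ is the full ring of integers of $\bbQ(\alpha_i)$, so the residue field at any prime above $p$ equals $\bbF_p[\overline{\alpha_i}]=\bbF_{p^{f_i}}$ even when $p$ divides $2k_\pm+1$ and ramifies, giving $\overline{\alpha_i}\notin\bbF_{p^n}$ exactly when $f_i\nmid n$.
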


\begin{proof}
By Lemma \ref{Lemma:Factorization}, the trace polynomial of the word $w$ factors as 
$$
(s^2 -2)
\prod_{i = 1}^{k_\pm}
(\tau(y_1) + \zeta_{2k_\pm + 1}^i + \zeta_{2k_\pm + 1}^{-i})
$$
over $\bbZ[\zeta_{2k_\pm + 1}+\zeta_{2k_\pm + 1}^{-1}]$. If some element of $\sln_2(\bbF_{p^n})$ lies in the image of the induced word  map on $\sln_2(\bbF_{p^n})$, then the word polynomial $\tau(w)$ will vanish at some point of $\bbF_{p^n}^3$. The polynomial $X^2-2$ has no root in $\bbF_{p^n}$. So one of the factors of the product 
$$\prod_{i = 1}^{k_\pm}
(\tau(y_1) + \zeta_{2k\pm 1}^i + \zeta_{2k\pm 1}^{-i})
$$
with coefficients in $\bbF_{p^{nf_1 \cdots f_{k_\pm} }}^3$, 
vanishes at a point of $\bbF_{p^n}^3$. Thus $\zeta_{2k\pm 1}^i + \zeta_{2k\pm 1}^{-i}$ is contained in $\bbF_{p^n}$ for some $1\leq i \leq k_\pm$. This is impossible since $f_i$ does not divide $n$ for all $i$. This proves the result. 
\end{proof}

Note that if there exists an integer $n$ such that the condition (3) in Theorem \ref{Thm:NonSurjWord} holds, then the inertia degree of $p$ in the extension $\bbQ(\zeta_{2k_\pm + 1}^i +  \zeta_{2k_\pm + 1}^{-i})$ is at least two for any $1\leq i \leq k_\pm$. 
If $k_\pm \equiv 1 \pmod 3$, then the inertia degree of $p$ in the extension $\bbQ(\zeta_{2k_\pm + 1}^i +  \zeta_{2k_\pm + 1}^{-i})$ is one for $i = \frac{2k_\pm + 1}{3}  \leq k_\pm$. 
So, $k_\pm \not\equiv 1 \pmod 3$ is a necessary condition for having an integer $n$ satisfying the condition (3) in Theorem \ref{Thm:NonSurjWord}. We prove that this congruence condition is also sufficient to guarantee the existence of primes $p$ and integers $n\geq 1$ satisfying the conditions in Theorem \ref{Thm:NonSurjWord}. 

\begin{proposition}
\label{Prop:InfManyPrime}
Let $k$ be an integer such that $k_\pm \geq 1$ and $k_\pm \not\equiv 1 \pmod 3$. Then there are infinitely many primes $p$ such that the integer $2$ is not a square modulo $p$ and the inertia degree of $p$ in the extension $\bbQ(\zeta_{2k_\pm +1}^i + \zeta_{2k_\pm +1}^{-i})$ is at least two for any $1\leq i \leq k_\pm $. 
\end{proposition}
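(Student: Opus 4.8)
The plan is to find primes $p$ that simultaneously satisfy two conditions: $2$ is not a square modulo $p$, and the inertia degree of $p$ in each field $K_i = \mathbb{Q}(\zeta_{2k_\pm+1}^i + \zeta_{2k_\pm+1}^{-i})$ is at least $2$ for all $1 \le i \le k_\pm$. Both conditions are statements about splitting behavior, so I would translate everything into Frobenius/splitting data and invoke Chebotarev.

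Let me set $m = 2k_\pm + 1$ and $\zeta = \zeta_m$.

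The condition that $2$ is not a square mod $p$ (for odd $p$) is a splitting condition in $\mathbb{Q}(\sqrt{2})$: it holds iff $p$ is inert in $\mathbb{Q}(\sqrt{2})$, equivalently $p \equiv 3, 5 \pmod 8$.

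The condition on the inertia degrees needs unpacking. The field $K_i = \mathbb{Q}(\zeta^i + \zeta^{-i})$ is the fixed field of complex conjugation inside $\mathbb{Q}(\zeta^i) = \mathbb{Q}(\zeta^{m/\gcd(i,m)})$, i.e., it is the maximal real subfield of the cyclotomic field $\mathbb{Q}(\zeta_{d_i})$ where $d_i = m/\gcd(i,m)$. The inertia degree $f_i$ of $p$ in $K_i$ is the order of the image of Frobenius, and since $\mathrm{Gal}(K_i/\mathbb{Q}) \cong (\mathbb{Z}/d_i\mathbb{Z})^\times / \{\pm 1\}$, we have $f_i \geq 2$ iff the class of $p$ in $(\mathbb{Z}/d_i\mathbb{Z})^\times/\{\pm 1\}$ is nontrivial, i.e. $p \not\equiv \pm 1 \pmod{d_i}$.

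So the goal reduces to producing infinitely many primes $p$ with $p \equiv 3$ or $5 \pmod 8$ and $p \not\equiv \pm 1 \pmod{d_i}$ for all $i$. The natural framework is the compositum $L = \mathbb{Q}(\sqrt{2}, \zeta_m)$, whose Galois group over $\mathbb{Q}$ I would analyze via $\mathrm{Gal}(L/\mathbb{Q}) \hookrightarrow \mathrm{Gal}(\mathbb{Q}(\sqrt{2})/\mathbb{Q}) \times \mathrm{Gal}(\mathbb{Q}(\zeta_m)/\mathbb{Q})$. By Chebotarev, it suffices to exhibit a single Frobenius conjugacy class — equivalently a single element $\sigma \in \mathrm{Gal}(L/\mathbb{Q})$ — whose $\sqrt{2}$-component is the nontrivial element (forcing $2$ to be a non-square) and whose image in each quotient $(\mathbb{Z}/d_i\mathbb{Z})^\times/\{\pm 1\}$ is nontrivial; positive density of such primes then follows. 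The key structural input is that $m$ is odd, so $\gcd(m, 8) = 1$ and the two fields $\mathbb{Q}(\sqrt 2)$ and $\mathbb{Q}(\zeta_m)$ are linearly disjoint over $\mathbb{Q}$; this lets me specify the $\sqrt{2}$-component and the cyclotomic component of $\sigma$ completely independently.

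The heart of the argument — and the step I expect to be the main obstacle — is constructing an element $a \in (\mathbb{Z}/m\mathbb{Z})^\times$ whose image in \emph{every} quotient $(\mathbb{Z}/d_i\mathbb{Z})^\times/\{\pm 1\}$ is nontrivial, i.e. $a \not\equiv \pm 1 \pmod{d_i}$ for all $i$. This is where the hypothesis $k_\pm \not\equiv 1 \pmod 3$ must be used: the remark preceding the proposition shows that if $k_\pm \equiv 1 \pmod 3$ then $3 \mid m$ and the divisor $d_i = 3$ occurs (for $i = m/3$), forcing inertia degree $1$ regardless of $p$ since $(\mathbb{Z}/3\mathbb{Z})^\times/\{\pm 1\}$ is trivial. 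Under the hypothesis, no such obstruction arises. I would argue that because each $d_i$ is an odd divisor of $m$ exceeding $1$ (the case $d_i = 1$ forces $K_i = \mathbb{Q}$, which I must handle separately — this happens only when $2i \equiv 0 \pmod m$, impossible for $m$ odd and $1 \le i \le k_\pm$, so in fact every $d_i \geq 3$), each quotient $(\mathbb{Z}/d_i\mathbb{Z})^\times/\{\pm 1\}$ has order $\tfrac{1}{2}\varphi(d_i) \geq 1$, and is nontrivial precisely when $d_i \neq 3, 4, 6$ — of which only $d_i = 3$ can occur among odd $d_i > 1$. Since the hypothesis rules out $3 \mid m$ in the relevant way, I can verify that a generator of $(\mathbb{Z}/m\mathbb{Z})^\times$ (or a suitable power / an explicit small element) has the required nontrivial image modulo every $d_i$ simultaneously; lifting the pair (nontrivial $\sqrt{2}$-class, the chosen $a$) through the linear-disjointness isomorphism gives the desired $\sigma$, and Chebotarev's density theorem supplies infinitely many primes in its class.
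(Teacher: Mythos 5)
Your translation of the two conditions into splitting data is correct and, modulo language, is the same route the paper takes (the paper imposes $p\equiv 3,5 \pmod 8$ together with congruence conditions modulo divisors of $m=2k_\pm+1$ and concludes by a density theorem). But the step you yourself single out as ``the heart of the argument'' --- producing a single class $a\in(\mathbb{Z}/m\mathbb{Z})^\times$ with $a\not\equiv \pm 1 \pmod{d_i}$ for \emph{all} $i$ --- is exactly where your proposal has a genuine gap, and the device you offer for it would fail. First, $(\mathbb{Z}/m\mathbb{Z})^\times$ has a generator only when the odd number $m$ is a prime power, so ``a generator'' does not exist in admissible cases such as $m=35$ (i.e.\ $k_\pm = 17\equiv 2\pmod 3$). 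Second, and more fundamentally, knowing that each individual quotient $(\mathbb{Z}/d_i\mathbb{Z})^\times/\{\pm 1\}$ is nontrivial does not by itself yield one element that is nontrivial in all of them simultaneously: a finite abelian group can be the union of the kernels of finitely many nontrivial quotient maps (e.g.\ $(\mathbb{Z}/2\mathbb{Z})^2$ is the union of its three subgroups of order $2$, so every element dies in at least one of the three order-$2$ quotients). So ``I can verify that a suitable element works'' is an assertion, not a proof; an actual construction is required.

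The missing construction is precisely the pivot of the paper's proof: reduce from all divisors to \emph{prime} divisors. If $a\equiv \pm 1\pmod d$ for some divisor $d>1$ of $m$, then $a\equiv \pm 1\pmod{\ell}$ for every prime $\ell\mid d$; hence, by the Chinese remainder theorem, it suffices to choose $a$ with $a\not\equiv 0,\pm 1\pmod{\ell}$ for each prime $\ell\mid m$. This choice is possible precisely because every prime divisor of $m$ satisfies $\ell\geq 5$, leaving at least $\ell-3\geq 2$ admissible residues: $m$ is odd, and $3\nmid m$ since $3\mid 2k_\pm +1$ if and only if $k_\pm\equiv 1\pmod 3$. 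Note that this is where the hypothesis $k_\pm\not\equiv 1\pmod 3$ enters quantitatively, whereas in your write-up it only served to rule out the single trivial quotient $(\mathbb{Z}/3\mathbb{Z})^\times/\{\pm 1\}$. With this lemma supplied, your Chebotarev argument in the compositum $\mathbb{Q}(\sqrt 2,\zeta_m)$ (using linear disjointness, valid as $m$ is odd) does close the proof; the paper carries out the same idea in more elementary form, imposing $p\equiv 3,5\pmod 8$ and $p^2\not\equiv 1$ modulo each prime divisor of $m$ and invoking Dirichlet to get a positive density of such primes.
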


\begin{proof}
Note that if $p$ is prime such that $p^2 \not\equiv 1$ modulo each divisor of $2k_\pm +1$ larger than $1$, then the inertia degree of any such prime in the extension $\bbQ(\zeta_{2k_\pm +1}^i + \zeta_{2k_\pm +1}^{-i})$ is at least $2$ for any $1\leq i \leq k_\pm $. 
Thus, it suffices to show that the primes $p$ such that $2$ is not a square modulo $p$, $p$ is coprime to $2k_\pm +1$ and $p^2 \not\equiv 1$ modulo each divisor of $2k_\pm +1$ larger than $1$, have density 
$$
\frac 12 \times \prod_{p\mid 2k_\pm + 1} \left(1 - \frac 3p\right).$$
This follows since such primes are precisely the primes $p$ satisfying the following conditions: 
$$p\equiv 3, 5\pmod 8$$
and 
$p^2 \not\equiv 1$ modulo each prime divisor of $2k_\pm +1$. 
\end{proof}

\begin{corollary}
If $k$ is an integer such that $k_\pm \geq 1$ and $k_\pm \not\equiv 1 \pmod 3$, then there are infinitely many primes $p$ and integers $n\geq 1$ satisfying the conditions in Theorem \ref{Thm:NonSurjWord}. 
\end{corollary}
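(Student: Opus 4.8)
The plan is to deduce the Corollary immediately from Proposition \ref{Prop:InfManyPrime} by exhibiting, for each prime it produces, a single admissible exponent. First I would invoke Proposition \ref{Prop:InfManyPrime}: since $k_\pm\geq 1$ and $k_\pm\not\equiv 1\pmod 3$, it furnishes infinitely many primes $p$ for which $2$ is not a square modulo $p$ (this is condition (1) of Theorem \ref{Thm:NonSurjWord}) and for which the inertia degree $f_i$ of $p$ in $\bbQ(\zeta_{2k_\pm+1}^i+\zeta_{2k_\pm+1}^{-i})$ satisfies $f_i\geq 2$ for every $1\leq i\leq k_\pm$.

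Next I would observe that for each such prime the choice $n=1$ already satisfies the two remaining conditions. Indeed $n=1$ is odd, which is condition (2); and since the only positive divisor of $1$ is $1$ itself whereas $f_i\geq 2$ for all $i$, we have $f_i\nmid 1$, which is condition (3). Hence all three conditions of Theorem \ref{Thm:NonSurjWord} hold simultaneously for these infinitely many primes $p$ with the common exponent $n=1$, giving infinitely many pairs $(p,n)$ as required; note that $n=1$ is also the case most relevant to the Galois application of \S\ref{SubSec:NonSurjGalois}, since it concerns $\psl_2(\bbF_p)$.

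The only genuine content therefore already lives inside Proposition \ref{Prop:InfManyPrime}, so the main obstacle — handled there rather than here — is the positivity of the relevant density: the hypothesis $k_\pm\not\equiv 1\pmod 3$ guarantees that $3\nmid 2k_\pm+1$, so every prime divisor of $2k_\pm+1$ is at least $5$ and each factor $1-\frac 3p$ stays strictly positive, forcing the admissible set of primes to be infinite. If one prefers a reading in which $n$ also varies over an infinite set for a fixed such $p$, one can instead take any odd $n$ with $\gcd(n,\operatorname{lcm}(f_1,\dots,f_{k_\pm}))=1$, since $\gcd(n,f_i)=1$ together with $f_i\geq 2$ again yields $f_i\nmid n$; but this refinement is not needed for the statement, which asks only for infinitely many pairs.
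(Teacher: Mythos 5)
Your proposal is correct and matches the paper's (implicit) argument: the corollary is stated without proof precisely because it follows from Proposition \ref{Prop:InfManyPrime} exactly as you describe, by taking $n=1$ (odd, and not divisible by any $f_i\geq 2$) for each of the infinitely many primes the proposition supplies. Your verification that $k_\pm\not\equiv 1\pmod 3$ forces $3\nmid 2k_\pm+1$, keeping every factor $1-\frac{3}{p}$ positive, is likewise the point the paper relies on.
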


\subsection{Non-surjective maps on absolute Galois group of number fields}
\label{SubSec:NonSurjGalois}
It would be interesting to look at non-surjective word maps on the absolute Galois group of number fields, i.e., on the group $\mathrm{Gal}(\overline{\mathbb Q}/K)$, where $\overline{\mathbb Q}$ denotes an algebraic closure of the field of rational numbers $\mathbb Q$, and $K/\bbQ$ is a finite extension. 
The inverse Galois problem has been solved by Shih for the group $\psl_2(\bbF_p)$ for any odd prime $p$ such that $2, 3$ or $7$ is a quadratic non-residue modulo $p$ 
\cite{ShihInverseGaloisPSL2Fp}. Recently, it has been solved by Zywina for the group $\psl_2(\bbF_p)$ for any prime $p\geq 5$ \cite{ZywinaInverseGaloisPSL}. 
Thus, for any prime $p\geq 5$, there exists a finite Galois extension $L/\bbQ$ whose Galois group is isomorphic to $\psl_2(\bbF_p)$. 
We use this result to obtain the following. 

\begin{proposition}
\label{Prop:NonSurjAbsGal}
If $K$ is a number field such that its degree over $\bbQ$ is coprime to $6$, then the word map $(x,y)\mapsto w(x, y)$ is not surjective on the absolute Galois group of $K$ where $w$ denotes one among the words 
$$x_1^{\pm 2} y_k, 
x_1^2y_{\pm k}
,$$
where $k$ is an integer with $k_\pm \geq 1$ and $k_\pm \not\equiv 1 \pmod 3$. 
\end{proposition}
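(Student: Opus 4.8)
The plan is to deduce non-surjectivity over $\mathrm{Gal}(\overline{\bbQ}/K)$ from the finite-field case, by exhibiting $\psl_2(\bbF_p)$ as a continuous quotient of $\mathrm{Gal}(\overline{\bbQ}/K)$ for a single well-chosen prime $p$. The mechanism is the functoriality of word maps: if $\pi\colon G\twoheadrightarrow Q$ is a surjective homomorphism, then $\pi(w(g_1,g_2)) = w(\pi(g_1),\pi(g_2))$, so surjectivity of $w$ on $G$ forces surjectivity of $w$ on $Q$. Contrapositively, if $w$ is non-surjective on some quotient $Q$ of $G$, then $w$ is non-surjective on $G$. It therefore suffices to produce, for the given $K$, a prime $p$ for which Theorem \ref{Thm:NonSurjWord} already shows $w$ to be non-surjective on $\psl_2(\bbF_p)$ together with a continuous surjection $\mathrm{Gal}(\overline{\bbQ}/K)\twoheadrightarrow \psl_2(\bbF_p)$.

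For the finite-field input I would take $n=1$ in Theorem \ref{Thm:NonSurjWord}; this is forced because the inverse Galois problem is available only for the prime fields $\psl_2(\bbF_p)$. With $n=1$, condition (2) holds trivially and condition (3) reduces to demanding that each inertia degree $f_i$ be at least $2$. By Proposition \ref{Prop:InfManyPrime} there are infinitely many primes $p$ (hence infinitely many with $p\geq 5$) for which $2$ is a non-square modulo $p$ and every $f_i$ is at least $2$; for each of these all three hypotheses of Theorem \ref{Thm:NonSurjWord} hold, so $w$ is non-surjective on $\psl_2(\bbF_p)$. For each such $p\geq 5$, Zywina's theorem \cite{ZywinaInverseGaloisPSL} (or Shih's \cite{ShihInverseGaloisPSL2Fp}) supplies a Galois extension $L/\bbQ$ with $\mathrm{Gal}(L/\bbQ)\cong \psl_2(\bbF_p)$.

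The crux is to choose $p$ within this infinite family so that $L\cap K = \bbQ$; then restriction yields $\mathrm{Gal}(\overline{\bbQ}/K)\twoheadrightarrow \mathrm{Gal}(L/L\cap K) = \mathrm{Gal}(L/\bbQ)\cong \psl_2(\bbF_p)$, and the first paragraph finishes the argument. Since $L/\bbQ$ is Galois, $L\cap K$ corresponds to a subgroup $H=\mathrm{Gal}(L/L\cap K)$ of the simple group $\psl_2(\bbF_p)$ with $[\psl_2(\bbF_p):H] = [L\cap K:\bbQ]$, and this index divides $[K:\bbQ]$, hence is coprime to $6$. I would thus reduce the proposition to the group-theoretic assertion that, for a suitable $p$ in the family, no proper subgroup of $\psl_2(\bbF_p)$ has index coprime to $6$: this forces $H=\psl_2(\bbF_p)$ and $L\cap K=\bbQ$. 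It suffices to examine maximal subgroups, since the index of a maximal overgroup of $H$ divides $[\psl_2(\bbF_p):H]$.

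The main obstacle is precisely this last step, and it is where the hypothesis that $[K:\bbQ]$ be coprime to $6$ and the freedom to vary $p$ both enter. Running through Dickson's classification, the Borel subgroup has even index $p+1$; the two dihedral subgroups have indices $p(p-1)/2$ and $p(p+1)/2$; and the exceptional subgroups $A_4$ and $A_5$ have indices $p(p^2-1)/24$ and $p(p^2-1)/120$, the type $S_4$ being ruled out by condition (1) since $2$ is a non-square modulo $p$. Each of these indices becomes divisible by $2$ or $3$ once $p$ is restricted to suitable residue classes modulo $8$, $9$ and $12$; these congruences are compatible with the conditions of Proposition \ref{Prop:InfManyPrime} (note that $k_\pm\not\equiv 1\pmod 3$ forces $3\nmid 2k_\pm+1$, so there is no interference modulo $3$), and are satisfied by infinitely many primes by Dirichlet's theorem. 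Alternatively, and more crudely, one may simply take $p$ so large that the minimal index of a proper subgroup of $\psl_2(\bbF_p)$, which is at least $p$ for $p\geq 5$, exceeds $[K:\bbQ]$; then $[L\cap K:\bbQ]$, being a divisor of $[K:\bbQ]$, cannot be the index of a proper subgroup and must equal $1$. Either route isolates a prime $p$ with $L\cap K=\bbQ$, and the delicate bookkeeping across subgroup types is the only genuinely non-formal point.
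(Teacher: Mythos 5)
Your proposal is correct, and it reaches the key point --- the linear disjointness $L\cap K=\bbQ$ --- by a genuinely different mechanism than the paper. The paper picks the prime arithmetically: it takes $p\equiv 2\pmod n$ where $n=[K:\bbQ]$, so that $p(p^2-1)\equiv 6\pmod n$; since $\gcd(n,6)=1$, the order $\tfrac 12 p(p^2-1)$ of $\psl_2(\bbF_p)$ is coprime to $n$, hence $[L\cap K:\bbQ]$ divides two coprime integers and equals $1$. This is exactly where the coprime-to-$6$ hypothesis enters, and it costs an extra congruence on $p$ whose compatibility with Proposition \ref{Prop:InfManyPrime} must be checked (it is compatible, for the same reason you note in your own setting: any common prime divisor $q$ of $n$ and $2k_\pm+1$ satisfies $q\geq 5$, and $p\equiv 2\pmod q$ gives $p^2\equiv 4\not\equiv 1\pmod q$). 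Your argument instead runs through the Galois correspondence and subgroup indices in $\psl_2(\bbF_p)$, and of your two routes the ``crude'' one is actually the cleanest of all three arguments on the table: choose $p$ in the infinite family of Proposition \ref{Prop:InfManyPrime} with $p>[K:\bbQ]$, and invoke the classical fact that every proper subgroup of $\psl_2(\bbF_p)$ has index at least $p$ for $p\geq 5$; then $H=\mathrm{Gal}(L/L\cap K)$ has index $[L\cap K:\bbQ]\leq [K:\bbQ]<p$, forcing $H=\psl_2(\bbF_p)$. This needs no congruence-interference check and, notably, never uses the coprime-to-$6$ hypothesis, so it in fact proves the proposition for \emph{every} number field $K$ --- strictly more than the stated result. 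Your first route (forcing every maximal-subgroup index in Dickson's list to be divisible by $2$ or $3$) can also be completed --- e.g.\ $p\equiv 3\pmod 8$ kills $S_4$ and makes $p(p+1)/2$ even, while $p\equiv 1\pmod 9$ makes $p(p-1)/2$, $p(p^2-1)/24$ and $p(p^2-1)/120$ divisible by $3$ --- but it is the most laborious option, and I would relegate it to a remark, presenting the large-$p$ argument as the proof.
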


\begin{proof}
Let $n$ denote the degree of $K$ over $\bbQ$. Let $p\geq 5$ be a prime such that $p\equiv 2\pmod n$. Note that $p(p^2-1)\equiv 6 \pmod n$. Since $n$ is coprime to $6$, it follows that $p(p^2-1)$ is coprime to $n$. Thus, the total number of elements of $\psl_2(\bbF_p)$ is coprime to $n$. Let $L$ be a Galois extension of $\bbQ$ such that the Galois group of this extension is isomorphic to $\psl_2(\bbF_p)$. Note that the fields $K, L$ are linearly disjoint. Hence, the Galois group of the extension $KL/K$ is isomorphic to $\psl_2(\bbF_p)$. From Theorem \ref{Thm:NonSurjWord} and Proposition \ref{Prop:InfManyPrime}, it follows that the word map $(x,y)\mapsto w(x, y)$ is not surjective on $\mathrm{Gal}(\overline{\mathbb Q}/K)$ where $w$ denotes one among the above-mentioned words. 
\end{proof}

\section{Further Questions} 

The connection between the length of the word map and its surjectivity or its non-surjectivity is not yet well understood.

\begin{question}
Does there exist a non-proper-power odd length word which is non-surjective on $\psl_{2}(\mathbb{F}_{q})$ for infinitely many $q$?
\end{question}

In fact, one can ask a more refined question about the possible lengths of the words inducing non-surjective maps on $\psl_{2}(\mathbb{F}_{q})$ for infinitely many $q$.

\begin{question}
Consider the set $A$ consisting of the lengths of the word $w$ in $F_2$ such that $w$ is a non-proper-power word and is non-surjective on $\psl_2(\bbF_q)$ for infinitely many $q$.  What can we say about $|\mathbb{N}\setminus A|$ or about $\mathbb{N}\setminus A$?
\end{question}

Note that $A$ contains $3q-1$ for any prime $q\geq 5$ by \cite[Theorem 1]{JamborLiebeckOBrien}. 
From Theorem \ref{Thm:NonSurjWord} and Proposition \ref{Prop:InfManyPrime}, it follows that the set 
$A$ contains $3r-1$ for any odd integer $r\geq 5$ not divisible by $3$ and 
$A$ contains $3r-5$ for any odd integer $r\geq 7$ such that $r+1$ is not divisible by $3$. Thus, $A$ contains almost all positive integers which are congruent to $\pm 2, \pm 4 \pmod{18}$. 

\section{Acknowledgements}
The first author wishes to thank Chen Meiri for a number of discussions on word maps. The first author is supported by the ISF Grant no. 1226/19 at the Department of Mathematics at the Technion. The second author would like to acknowledge the Initiation Grant from the Indian Institute of Science Education Research Bhopal, and the INSPIRE Faculty Award from the Department of Science and Technology, Government of India.

\def\cprime{$'$} \def\Dbar{\leavevmode\lower.6ex\hbox to 0pt{\hskip-.23ex
  \accent"16\hss}D} \def\cfac#1{\ifmmode\setbox7\hbox{$\accent"5E#1$}\else
  \setbox7\hbox{\accent"5E#1}\penalty 10000\relax\fi\raise 1\ht7
  \hbox{\lower1.15ex\hbox to 1\wd7{\hss\accent"13\hss}}\penalty 10000
  \hskip-1\wd7\penalty 10000\box7}
  \def\cftil#1{\ifmmode\setbox7\hbox{$\accent"5E#1$}\else
  \setbox7\hbox{\accent"5E#1}\penalty 10000\relax\fi\raise 1\ht7
  \hbox{\lower1.15ex\hbox to 1\wd7{\hss\accent"7E\hss}}\penalty 10000
  \hskip-1\wd7\penalty 10000\box7}
  \def\polhk#1{\setbox0=\hbox{#1}{\ooalign{\hidewidth
  \lower1.5ex\hbox{`}\hidewidth\crcr\unhbox0}}}
\providecommand{\bysame}{\leavevmode\hbox to3em{\hrulefill}\thinspace}
\providecommand{\MR}{\relax\ifhmode\unskip\space\fi MR }
% \MRhref is called by the amsart/book/proc definition of \MR.
\providecommand{\MRhref}[2]{%
  \href{http://www.ams.org/mathscinet-getitem?mr=#1}{#2}
}
\providecommand{\href}[2]{#2}

\end{document}